\documentclass{amsart}

\usepackage{graphicx}

\newtheorem{theorem}{Theorem}[section]
\newtheorem{question}{Question}
\newtheorem{claim}{Claim}

\theoremstyle{remark}
\newtheorem{remark}{Remark}[section]

\begin{document}

\title{Hyperbolic small knots in spherical manifolds}

\author{Kazuhiro Ichihara}
\address{Department of Mathematics, College of Humanities and Sciences, Nihon University, 3-25-40 Sakurajosui, Setagaya-ku, Tokyo 156-8550, Japan}
\email{ichihara.kazuhiro@nihon-u.ac.jp}

\keywords{small knot, hyperbolic knot, spherical 3-manifold, lens space}

\subjclass[2020]{Primary 57K10; Secondary 57K32}

\date{\today}

\thanks{This work was supported by JSPS KAKENHI Grant Number JP22K03301.}

\begin{abstract}
It was conjectured by Lopez that every closed irreducible non-Haken 3-manifold contains a small knot. 
In this paper, we give explicit examples of hyperbolic small knots in most closed orientable spherical 3-manifolds other than prism manifolds. 
\end{abstract}

\maketitle

\section{Introduction}

The following conjecture is supposed in \cite{Lopez92,Lopez}
that small knots always exist in irreducible non-Haken (not sufficiently large) 3-manifolds.
A knot is called \textit{small} if there are no closed embedded essential surfaces in its exterior.

Actually, Lopez states in \cite[Theorem A]{Lopez} that every Seifert fibered 3-manifold over the 2-sphere with at most three singular fibers contains infinitely many small hyperbolic knots. 
However, in \cite[remark after Conjecture 1.5, p.150]{Matsuda}, Matsuda claimed that his proof appears to be incomplete. 
Also, the Lopez's proof depends upon Thurston's hyperbolic Dehn surgery theorem, and so, no explicit examples were given. 

In this paper, we consider hyperbolic small knots in certain non-Haken Seifert fibered 3-manifolds; \textit{spherical} 3-manifolds, which are 3-manifolds endowed with the spherical geometric structure. 
It is known that those are actually Seifert fibered 3-manifold over the 2-sphere with at most three singular fibers. 
See \cite{Scott, AschenbrennerFriedlWilton} for example. 

The following is our main result; in the proof, we give explicit examples of hyperbolic small knots in most closed orientable spherical 3-manifolds other than prism manifolds. 

\begin{theorem}\label{thm:main}
Every spherical 3-manifold, except for prism manifolds and the Seifert fibered manifolds $\pm (-1; 1/2, 1/3, 1/m )$ with $m \in \{ 3, 4, 5 \}$, contains a hyperbolic small knot.
\end{theorem}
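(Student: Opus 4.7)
The plan is to work case by case according to the Seifert fibered classification of closed orientable spherical 3-manifolds, which the introduction already invokes: every such manifold is Seifert fibered over $S^2$ with at most three singular fibers, and is a lens space or has base orbifold $S^2(2,2,n)$, $S^2(2,3,3)$, $S^2(2,3,4)$, or $S^2(2,3,5)$. Since the statement already excludes prism manifolds (base $S^2(2,2,n)$), I focus on lens spaces together with the tetrahedral, octahedral, and icosahedral families.

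My main engine would be the following Dehn surgery trick. If $K$ is a hyperbolic small knot in a 3-manifold $M$ and a non-trivial slope $r$ on its boundary produces the target spherical manifold $N$, then the core $K^{*}$ of the attached solid torus is a knot in $N$ whose exterior coincides with the exterior of $K$. Hence $K^{*}$ is automatically hyperbolic and small. The problem therefore reduces to realising, explicitly, every non-excluded spherical manifold as a non-trivial Dehn surgery on a hyperbolic small knot living in a well-understood source manifold, typically $S^3$ or another lens space. This stays in the spirit of Lopez's original strategy but, crucially, uses specific knots with known small exteriors rather than an abstract appeal to Thurston's hyperbolic Dehn surgery theorem, so the examples come out explicit.

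For lens spaces I would draw on the extensive literature on lens space surgeries: certain two-bridge knots and doubly primitive knots are known to be hyperbolic and small, and their non-trivial surgery slopes realise essentially every $L(p,q)$ other than $S^3$. For the tetrahedral, octahedral, and icosahedral families, I would use explicit surgery descriptions of small Seifert fibered spherical manifolds as surgeries on twisted torus knots or related hyperbolic small knots (work of Dean, Mattman--Miyazaki--Motegi, Bleiler--Hodgson and others), and then tabulate the range of Seifert invariants $\pm(e;1/2,1/3,p/q)$ that is realised. If this range exhausts everything except the prescribed exceptions, the theorem follows.

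The chief obstacles I foresee are twofold. First, smallness and hyperbolicity of the chosen knot exteriors must be verified uniformly across the parameter range; smallness typically rests on a branched-surface or thin-position argument for the ambient cusped manifold, whereas hyperbolicity requires ruling out Seifert-fibered or reducible exteriors, usually by exhibiting a hyperbolic structure (e.g.\ via \texttt{SnapPy}) or by an incompressibility argument. Second, and more delicate, is pinning down precisely why the three invariants $\pm(-1;1/2,1/3,1/m)$ with $m\in\{3,4,5\}$ resist the construction: these are the smallest-volume, most symmetric spherical manifolds of tetrahedral, octahedral, and icosahedral type, and I expect that any hyperbolic small knot surgery producing them would collide with known volume bounds or with the Cyclic/Finite Surgery Theorems, which is exactly what forces their exclusion from the statement.
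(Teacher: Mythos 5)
There is a genuine gap at the heart of your plan: the realization step. Your dual-knot trick itself is fine (the exterior of the core of the filling solid torus is the exterior of the original knot, so hyperbolicity and smallness transfer), but the claim that hyperbolic small knots in $S^3$ suffice to realize ``essentially every $L(p,q)$'' by surgery is false. Lens spaces obtained by Dehn surgery on knots in $S^3$ form a very thin family (the Berge list, by Greene's resolution of the lens space realization problem), so most lens spaces are not surgery on \emph{any} knot in $S^3$, let alone a hyperbolic small one. The situation is worse for the type $\mathbf{T}$, $\mathbf{O}$, $\mathbf{I}$ manifolds: generically these are surgeries on the trefoil, a non-hyperbolic knot, and the spherical manifolds arising from finite surgeries on hyperbolic knots in $S^3$ (the Dean, Bleiler--Hodgson, Mattman--Miyazaki--Motegi examples you cite) are far from exhausting the family $\pm(-1;1/2,1/3,a_3/b_3)$. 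Your fallback of using ``another lens space'' as the source is circular, since a supply of hyperbolic small knots in lens spaces is part of what must be proven. So the case-by-case tabulation you propose cannot close.

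The paper avoids this by working with two-component $2$-bridge links rather than knots in $S^3$. For lens spaces it takes the link $L_k=K\cup K'$ with continued fraction $[2,2k,-2]$: since $K'$ is unknotted, $(-p/q)$-filling on $K'$ realizes \emph{every} $L(p,q)$, and $K$ becomes the desired knot there. For types $\mathbf{T}$, $\mathbf{O}$, $\mathbf{I}$ it uses the Whitehead link, whose $1$-filling on one component gives the trefoil, so the manifolds $\pm(-1;1/2,1/3,a_3/b_3)$ are fillings of the Whitehead link exterior, and the knot is the dual core of the filled component. Hyperbolicity is then checked not by inheritance from a small knot in $S^3$ but by the classification of exceptional surgeries on components of $2$-bridge links, and smallness follows from the complete list of boundary slope pairs for these links (Floyd--Hatcher, Hoste--Shanahan): a closed essential surface in the knot exterior would produce a surface in the link exterior whose boundary slope pair does not appear in the table. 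Note also that your guess about the excluded manifolds $\pm(-1;1/2,1/3,1/m)$, $m\in\{3,4,5\}$, is off: they are excluded not because volume bounds or the finite surgery theorems forbid such knots, but simply because these correspond to $a_3=1$, where the needed filling slopes $1,2,3$ on the Whitehead link are exceptional, so this particular construction fails to produce a hyperbolic knot there; whether those manifolds contain hyperbolic small knots is left open.
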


In fact, by the Geometrization theorem due to \cite{Perelman1,Perelman2}, a closed orientable 3-manifold is a spherical 3-manifold if and only if it has the finite fundamental group. 
It implies that it contains no essential surfaces, i.e., it is non-Haken. 
See \cite{Scott,AschenbrennerFriedlWilton} for example. 
We remark that non-hyperbolic small knots always exist in a non-Haken Seifert fibered 3-manifold with base space a sphere and three exceptional fibers: just take for the knot an exceptional fiber. 

In the next section, we briefly overview the history about (non-)Haken 3-manifolds and small knots. 
The proof of the main theorem will be given in Section 3.

\section{Backgrounds}

Please refer to \cite{AschenbrennerFriedlWilton, JacoBook} for standard definitions and notations that are not explained below.

Let $M$ be a compact, orientable 3-manifold.
A closed, orientable surface $F$ embedded in $M$ is called \textit{essential} if either $F$ is a 2-sphere that does not bound a 3-ball in $M$, or $F$ is incompressible and not parallel to any component of $\partial M$.
The manifold $M$ is called \textit{small} if it contains no closed essential surface.
A closed, irreducible 3-manifold $M$ is called \textit{Haken} if it contains a closed essential surface.
A knot $K$ in a closed, orientable 3-manifold $M$ is called \textit{small} if its exterior $E(K)$ is a small 3-manifold.
Here, the exterior $E(K)$ is the manifold obtained from $M$ by removing an open tubular neighborhood of $K$.

In \cite{Haken1961, Haken1962}, Haken introduced the concept of an \textit{incompressible surface} embedded in a 3-manifold.
Roughly speaking, incompressibility provides a geometric interpretation of the $\pi_1$-injectivity of embedded surfaces in 3-manifolds.
Today, a compact, orientable 3-manifold is called a \textit{Haken manifold} if it is irreducible and contains a properly embedded, two-sided incompressible surface.
Since then, it has been shown that Haken manifolds possess a variety of desirable properties.

In a sense, “most” 3-manifolds are Haken. 
For example, if $\mathrm{rank}\,H_1(M;\mathbb{Q}) \ge 1$ holds for a closed, orientable 3-manifold $M$, then $M$ is Haken.
Moreover, Agol \cite{Agol} proved the so-called Virtual Haken Conjecture, originally proposed by Waldhausen \cite{Waldhausen}.
This conjecture asserts that every compact, orientable, irreducible 3-manifold with infinite fundamental group is \textit{virtually Haken}, i.e., it admits a finite cover that is a Haken manifold.

On the other hand, it is known that not every closed, orientable 3-manifold is Haken.
In fact, Thurston demonstrated in his well-known lecture notes \cite{ThurstonLectureNote} that the figure-eight knot in the 3-sphere has no closed incompressible surface embedded in its exterior that is not boundary-parallel, and that all but finitely many 3-manifolds obtained by Dehn surgery on the figure-eight knot are non-Haken.
Extending this result, Hatcher showed in \cite{Hatcher} that all but finitely many 3-manifolds obtained by Dehn surgery on a small knot are non-Haken.
Here, a knot in a 3-manifold is called \textit{small} if its exterior is irreducible and every closed surface embedded in the exterior is either compressible or boundary-parallel.

This implies that, given a small knot, one can construct infinitely many non-Haken 3-manifolds.
In contrast, finding small knots in 3-manifolds is rather difficult in general. 
Only for knots in the 3-sphere $S^3$, all the torus knots are small; see \cite{JacoBook}, for example.
Actually, the exterior of a torus knot is a Seifert fibered space with two singular fibers over the disk, and such manifolds are known to contain no essential surfaces.
For other knots, it is known from \cite{HatcherThurston} and \cite{Oertel} that 2-bridge knots and Montesinos knots of length three are small. 
Also see \cite{BoyerZhang93,LinWei} for some closed braids and some alternating knots. 

Other than the 3-sphere, Lopez states in \cite[Theorem A]{Lopez} that every Seifert fibered 3-manifold over the 2-sphere with at most three singular fibers contains infinitely many small hyperbolic knots.
However, it was claimed by Matsuda that the proof contains a serious gap in \cite[remark after Conjecture 1.5, p.~150]{Matsuda}.

Nevertheless, the following conjecture raised in \cite{Lopez} can be considered valid: 
Every closed, orientable, irreducible, non-Haken 3-manifold contains a small knot.
This statement is now sometimes referred to as the \textit{Lopez Conjecture}.
See also \cite[Problem 1.13]{Rubinstein} for further discussion.

For some other constructions of small knots, see \cite{Matsuda} (in Sapphire spaces), \cite{Worden} (in $S^3$ with exteriors of large Heegaard genus), and \cite{QiuWang} (in handlebodies).

\section{Proofs}

It is known that a 3-manifold is spherical if and only if it is homeomorphic to the quotient of $S^3$ by a finite group acting freely and isometrically. 
See \cite{AschenbrennerFriedlWilton, Bonahon}, for example.
In particular, the fundamental group of a spherical 3-manifold can be viewed as a finite subgroup of $SO(4)$ that acts freely on $S^3$.
Then, in light of the classification of such subgroups, spherical 3-manifolds are classified into types $\mathbf{C}$, $\mathbf{D}$, $\mathbf{T}$, $\mathbf{O}$, or $\mathbf{I}$.
Here we omit the details; see \cite{AschenbrennerFriedlWilton, BoyerZhang}, for example. 

We only note the following:
A manifold of type $\mathbf{C}$ is a lens space $L(p,q)$ for coprime positive integers $p$ and $q$, and a manifold of type $\mathbf{D}$ is a prism manifold.
In this paper, the lens space $L(p,q)$ is assumed to be obtained by $(-p/q)$-surgery on the unknot in $S^3$. 

Then, Theorem~\ref{thm:main} follows from the following two theorems. 
In the proofs, we give explicit examples of hyperbolic small knots. 

\begin{theorem}\label{thm:lens}
Every lens space contains infinitely many hyperbolic small knot. 
\end{theorem}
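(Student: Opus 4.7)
The plan is to produce the required family via a Dehn surgery and twist-family construction starting from a suitable link in $S^3$. Concretely, I would seek a 3-component link $\mathcal{L} = K_1 \cup K_2 \cup c \subset S^3$ such that: (i) $K_1$ is unknotted, so that the paper's convention realises $L(p,q)$ as $(-p/q)$-surgery on $K_1$; (ii) $c$ is unknotted with $\mathrm{lk}(K_1, c) = 0$, admitting a spanning disk disjoint from $K_1$; and (iii) $\mathcal{L}$ is hyperbolic with $E_{S^3}(\mathcal{L})$ containing no closed essential surface. A canonical candidate is the $3$-chain link $6^3_1$, whose exterior is the well-studied magic manifold---hyperbolic, and known to contain no closed incompressible surface.

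Performing $(-p/q)$-surgery on $K_1$ yields $L(p,q)$, and because $c$ bounds a disk disjoint from $K_1$ in $S^3$, the same disk survives in $L(p,q)$, so $c$ remains an unknot there. For each integer $n$, a $(-1/n)$-surgery along $c$ is then a Rolfsen twist along that disk, hence a self-homeomorphism of $L(p,q)$, and it carries $K_2$ to a knot $K_n \subset L(p,q)$. The exterior $E_{L(p,q)}(K_n)$ is precisely $E_{S^3}(\mathcal{L})$ Dehn-filled with slopes $-p/q$ on $\partial N(K_1)$ and $-1/n$ on $\partial N(c)$.

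To verify that $K_n$ is hyperbolic and small for all but finitely many $n$, I would combine two standard arguments. For smallness: any closed essential surface $F$ in $E_{L(p,q)}(K_n)$ must meet at least one of the filling solid tori (otherwise $F$ already lies in $E_{S^3}(\mathcal{L})$, contradicting smallness), and the trace of $F$ on the unfilled exterior is then an essential surface whose boundary slope equals a filling slope; by Hatcher's finiteness theorem only finitely many slopes on each boundary torus arise as such boundary slopes. For hyperbolicity, Thurston's hyperbolic Dehn surgery theorem, applied first to $E_{S^3}(\mathcal{L})$ filled at $K_1$ and then at $c$, excludes only finitely many $n$. Finally, to distinguish the $K_n$ I would invoke the volume inequality: $\mathrm{vol}(E_{L(p,q)}(K_n))$ converges strictly monotonically from below to $\mathrm{vol}(E_{L(p,q)}(K_2 \cup c))$ as $|n| \to \infty$, ensuring that infinitely many $K_n$ are pairwise non-isotopic.

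The principal obstacle is establishing the smallness of the chosen $E_{S^3}(\mathcal{L})$, which must be known (or proved) in advance---this is why the magic manifold is an appealing starting point. A secondary issue is that the specific slope $-p/q$ on $\partial N(K_1)$ may itself be a boundary slope of an essential surface in $E_{S^3}(\mathcal{L})$, in which case the smallness argument must be refined to show that such surfaces become inessential after filling $c$ with most slopes $-1/n$. One must also ensure that the 2-cusped intermediate manifold is hyperbolic so that Thurston's surgery theorem applies; this holds for all but finitely many $p/q$, and the remaining exceptional lens spaces would require either a different choice of $\mathcal{L}$ or a direct construction.
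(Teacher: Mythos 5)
Your strategy (a surgery/twist family plus Thurston's hyperbolic Dehn surgery and Hatcher finiteness) is genuinely different from the paper's, but as written it has two concrete gaps. First, your named candidate violates your own hypothesis (ii): in the $3$-chain link $6^3_1$ every pair of components has linking number $\pm 1$, so $c$ does not bound a disk disjoint from $K_1$. Consequently a $(-1/n)$-Rolfsen twist along $c$ changes the surgery coefficient on $K_1$ (and the framing data), and the resulting manifold is in general a \emph{different} lens space, not $L(p,q)$; the family $K_n$ does not live in a fixed $L(p,q)$. You would need a different three-component link (say, an augmentation of a Whitehead-type link by a split-from-$K_1$ twisting circle), and then the smallness of its exterior and its boundary-slope data are exactly the things that must be established before the rest of the argument can start.

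Second, and more fundamentally, even with a correct link the argument as given only treats all but finitely many lens spaces, while the theorem quantifies over every $L(p,q)$. The filling slope $-p/q$ on $\partial N(K_1)$ is forced by the target lens space: if it happens to be an exceptional slope, the intermediate two-cusped manifold need not be hyperbolic and Thurston's theorem no longer yields hyperbolicity of the $K_n$; if it happens to be a boundary slope of an essential surface meeting only that cusp, the Hatcher-finiteness argument does not exclude closed essential surfaces for any $n$. You acknowledge both points but defer them to ``a different choice of $\mathcal{L}$ or a direct construction''; that deferred step is precisely the crux of the theorem, so the proof is incomplete. The paper resolves it by working, for each fixed $L(p,q)$, with the explicit family of 2-bridge links $C(2,2k,-2)$ (the twisting is absorbed into the parameter $k$): the complete Hoste--Shanahan list of boundary-slope pairs shows the only dangerous slope on the relevant cusp is $4k$, and the classification of exceptional surgeries on components of 2-bridge links rules out non-hyperbolic fillings, so choosing $k$ with $4k \ne \pm p/q$ covers every lens space explicitly, without invoking Thurston's hyperbolic Dehn surgery theorem at all.
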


Note that any spherical 3-manifold is a Seifert fibered space with at most 3 singular fibers over the sphere.
Moreover, any spherical 3-manifold of type $\mathbf{T}$, $\mathbf{O}$, or $\mathbf{I}$ admits a unique Seifert bundle structure.
See \cite{Scott}, for example. 

\begin{theorem}\label{thm:TOI}
Let $M$ be a spherical 3-manifold of type $\mathbf{T}$, $\mathbf{O}$, or $\mathbf{I}$. 
Suppose that $M$ is not a Seifert fibered manifold of the form $\pm (-1; 1/2, 1/3, 1/m)$ with $m \in { 3, 4, 5 }$.
Then, $M$ contains a hyperbolic small knot. 
\end{theorem}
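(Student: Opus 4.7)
The plan is to exhibit, for each spherical $3$-manifold $M$ of type $\mathbf{T}$, $\mathbf{O}$, or $\mathbf{I}$ outside the stated exclusion list, an explicit hyperbolic small knot $K \subset M$ via the \emph{dual knot} construction. The principle is: if $M$ is obtained by Dehn surgery on a knot $K' \subset S^3$ with hyperbolic small exterior, then the core of the surgery solid torus is a knot $K \subset M$ whose exterior $E(K)$ is homeomorphic to $E(K')$, so that smallness and hyperbolicity of $K'$ transfer to $K$. More generally, I would realize $M$ as a Dehn filling of a small hyperbolic link complement in $S^3$, leaving one cusp unfilled to produce the desired knot.

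A concrete realization uses the $3$-chain link $C \subset S^3$, whose complement is the \emph{magic manifold} $N$---a small hyperbolic $3$-manifold with three cusps whose Dehn surgery space has been studied extensively (Martelli--Petronio). I would parametrize each target $M$ by its Seifert invariants $(e_0;(2,\beta_1),(3,\beta_2),(n,\beta_3))$ with $n \in \{3,4,5\}$, and for each such $M$ locate explicit filling slopes $(s_1,s_2)$ on two cusps of $N$ so that the resulting two-cusp filling $N(s_1,s_2)$ is a knot exterior in $M$, with the slope $s_3$ on the third cusp giving $M$. The desired knot $K \subset M$ is then the core of the solid torus filling the third cusp at slope $s_3$.

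The verification proceeds in three steps: (a) $E(K) = N(s_1,s_2)$ is hyperbolic by Thurston's hyperbolic Dehn surgery theorem, valid for all but finitely many $(s_1,s_2)$; (b) $E(K)$ is small because $N$ is small, and a suitable multi-cusped analogue of Hatcher's theorem ensures that all but finitely many Dehn fillings of a small hyperbolic manifold remain small; (c) $E(K)$ filled along $s_3$ is $M$ by construction.

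The main obstacle is providing explicit filling slopes for every $M$ outside the exclusion list and verifying smallness and hyperbolicity case by case, which amounts to an extensive analysis guided by the Martelli--Petronio classification of exceptional fillings of $N$. The excluded cases $\pm(-1;1/2,1/3,1/m)$ for $m \in \{3,4,5\}$---namely the Brieskorn homology spheres $\pm\Sigma(2,3,m)$, including the Poincar\'e sphere---are spherical manifolds for which either no admissible filling of $N$ exists or every resulting $E(K)$ fails to be hyperbolic and small, forcing their exclusion from the theorem.
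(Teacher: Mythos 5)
Your overall strategy---realize $M$ as a Dehn filling of a small hyperbolic link exterior and take the core of a filling solid torus as the knot---is exactly the philosophy of the paper, which however uses the Whitehead link exterior (two cusps) rather than the magic manifold: by Doig, $M$ is $(6-b_3/a_3)$-surgery on the right trefoil, hence surgery on the Whitehead link $K\cup K'$ along the pair $(6-b_3/a_3,\,1)$, and the knot is the dual of the $1$-filled component. But as written your argument has a genuine gap at the smallness step (b). There is no ``multi-cusped analogue of Hatcher's theorem'' saying that all but finitely many fillings of a small hyperbolic manifold remain small: Hatcher's finiteness of boundary slopes holds for a single torus boundary component, while for two or more cusps the boundary-slope systems of essential surfaces can form infinite (one-parameter) families---precisely what happens for the Whitehead link, as Table~\ref{tab:1} shows. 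Consequently ``finitely many exceptions'' is not available; one must instead use an actual classification of the essential surfaces (or boundary-slope pairs) of the chosen link exterior and check that the specific pair consisting of the filling slopes on the filled cusps together with meridional or empty boundary on the unfilled cusp never occurs. The paper does exactly this via the Floyd--Hatcher/Hoste--Shanahan data for the Whitehead link; for the magic manifold you would need the corresponding surface data, which you do not supply. Similarly, step (a) cannot rest on ``all but finitely many fillings are hyperbolic'': the target $M$ pins down the filling slopes, so you must verify that those particular slopes are not exceptional (this is where the paper invokes Martelli--Petronio--Roukema and the classification of exceptional surgeries on 2-bridge link components, and where the excluded cases $a_3=1$, i.e.\ slopes $1,2,3$ on the Whitehead link, arise). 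You acknowledge this verification as the ``main obstacle'' but do not carry it out, nor do you exhibit any explicit slopes realizing each type $\mathbf{T}$, $\mathbf{O}$, $\mathbf{I}$ manifold, so the proof is a plan rather than an argument.

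Two smaller points. Your description of the excluded manifolds as the Brieskorn homology spheres $\pm\Sigma(2,3,m)$, $m\in\{3,4,5\}$, is inaccurate: only $m=5$ gives the Poincar\'e sphere, while $(-1;1/2,1/3,1/3)$ and $(-1;1/2,1/3,1/4)$ have $H_1\cong\mathbb{Z}/3$ and $\mathbb{Z}/2$ respectively (they are $3$- and $2$-surgery on the right trefoil). And the reason for their exclusion is not that ``no admissible filling exists'' or that every candidate knot provably fails; it is simply that the relevant surgery slopes on the auxiliary link are exceptional, so the method's hyperbolicity argument breaks down there---the theorem leaves those cases open rather than ruling them out.
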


\subsection{Lens spaces}

\begin{proof}[Proof of Theorem~\ref{thm:lens}]

Let $L(p,q)$ be the lens space for coprime integers $p > 0$ and $q \ne 0$.
We will find an infinite family of hyperbolic small knots in $L(p,q)$.

Consider the 2-bridge link $L_k = K \cup K'$ in $S^3$ corresponding to the continued fraction expansion $[2, 2k, -2]$ for an integer $k \ge 2$.
See Figure~\ref{fig:1}.
Refer to \cite{Ichihara2012, ichiharaMattman} for details.
Here we assume that $4k \ne \pm p/q$; there are infinitely many $k$ satisfying this condition.

 \begin{figure}[htb]
 \centering
    \includegraphics[width=.8\textwidth]{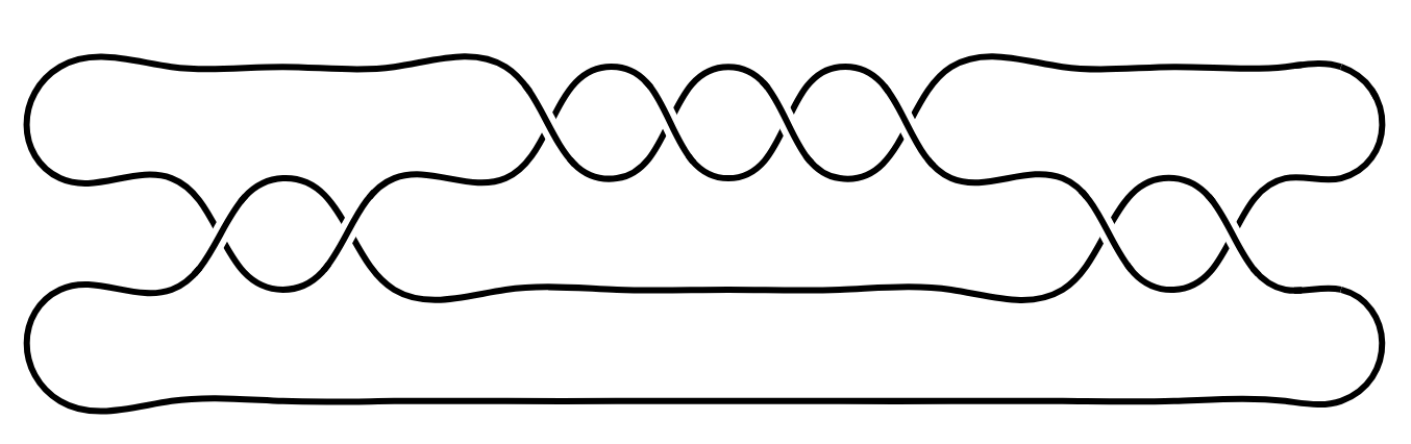}
 \caption{The link diagram $C(2,2k,-2)$ for $k=-2$.}\label{fig:1}
 \end{figure}

We perform Dehn surgery on $K' \subset L_k$ along the slope $-p/q$ to obtain the lens space $L(p,q)$, and regard $K$ as a knot in $L(p,q)$.

The following claim follows from the classification of exceptional surgeries on a component of a 2-bridge link, obtained in \cite[Theorem 1.1]{Ichihara2012}, and from the arguments in the proof of \cite[Lemma 3]{ichiharaMattman}. 

\begin{claim}
    The knot $K$ is a hyperbolic knot in $L(p,q)$. 
\end{claim}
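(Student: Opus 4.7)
The plan is to reduce the claim to the classification of exceptional Dehn surgeries on one component of a hyperbolic 2-bridge link, proved in [Ichihara2012, Theorem 1.1], following the same pattern used in [ichiharaMattman, Lemma 3]. The exterior of $K$ in $L(p,q)$ is obtained from the link exterior $E(L_k) = E(K \cup K')$ by Dehn filling the $K'$-cusp along the slope $-p/q$, so hyperbolicity of $K \subset L(p,q)$ is equivalent to this Dehn filling producing a hyperbolic 3-manifold.

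First I would confirm that, for every integer $k \ge 2$, the link $L_k = C(2,2k,-2)$ is a hyperbolic 2-bridge link in $S^3$: the fraction $[2,2k,-2]$ is neither zero, $\infty$, nor $1/n$, so $L_k$ is neither split nor a $(2,n)$-torus link, and hence by Menasco its complement admits a complete finite-volume hyperbolic structure. Thus $E(L_k)$ is a two-cusped hyperbolic manifold, and the component $K'$ is unknotted in $S^3$, so that $-p/q$-filling on $K'$ (in the standard meridian--longitude framing coming from $S^3$) yields the lens space $L(p,q)$ in which $K$ sits.

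Next I would appeal to [Ichihara2012, Theorem 1.1], which gives the complete list of exceptional slopes on each component of any hyperbolic 2-bridge link. Reading off this list for the $K'$-component of $C(2,2k,-2)$, one obtains a small, explicit finite collection of slopes whose Dehn filling fails to produce a hyperbolic manifold. The translation of this collection into the $S^3$-framing on $K'$ produces (up to sign) the single exceptional fractional slope $4k$ as the one that is fractional rather than integral or meridional; the remaining exceptional slopes are finitely many integers already excluded by allowing infinitely many $k$, as handled in [ichiharaMattman, Lemma 3]. The hypothesis $4k \ne \pm p/q$ is exactly what is needed to rule out the fractional exceptional slope, so $-p/q$ avoids all exceptional slopes on $K'$ and the filling is hyperbolic.

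The main obstacle will be the framing bookkeeping: one must match the meridian--longitude coordinates used in the classification of [Ichihara2012] with the $(-p/q)$-surgery convention used to define $L(p,q)$ here, and verify that the fractional exceptional slope on $K'$ corresponds precisely to $\pm 4k$ under this identification. Once this is confirmed, the remaining (finitely many integer) exceptional slopes can only coincide with $-p/q$ for at most finitely many $k$, so infinitely many $k$ still satisfy the hypothesis and produce hyperbolic $K \subset L(p,q)$, proving the claim.
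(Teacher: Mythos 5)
Your opening reduction --- $K$ is hyperbolic in $L(p,q)$ precisely when the $(-p/q)$-filling of the $K'$-cusp of $E(L_k)$ is not exceptional --- agrees with the paper's first step, but the way you then dispose of the exceptional slopes has a genuine gap. You assert that \cite[Theorem 1.1]{Ichihara2012} hands you, for each $C(2,2k,-2)$, an explicit finite list of exceptional slopes on $K'$ whose distinguished member is $4k$, so that the hypothesis $4k \ne \pm p/q$ is ``exactly what is needed.'' That is not what the cited classification provides, and the list you describe is really the \emph{boundary-slope} data of Table~\ref{tab:1}, which the paper uses later for the smallness argument, not for hyperbolicity: a boundary slope such as $-4k$ need not be an exceptional slope, and in fact the hypothesis $4k \ne \pm p/q$ plays no role in the hyperbolicity claim. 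Your fallback for the other slopes --- ``finitely many integers already excluded by allowing infinitely many $k$'' --- also does not prove the claim as stated (hyperbolicity for \emph{every} $k \ge 2$ with $4k \ne \pm p/q$), and it would not even salvage Theorem~\ref{thm:lens} if those integral exceptional slopes were independent of $k$ and happened to coincide with $-p/q$ (say for $L(2,1)$ or $L(3,1)$); you never establish that they move with $k$ or that they are absent.

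What the paper actually does, and what is missing from your proposal, is a uniqueness argument for the exceptional slope on $K'$: the $0$-surgery on $K'$ is exceptional (toroidal), since $K'$ bounds a once-punctured torus in $E(L_k)$; by the argument in \cite[Proof of Lemma 3]{ichiharaMattman}, a component of a hyperbolic 2-bridge link admitting more than one exceptional surgery must admit a Seifert surgery; by \cite[Theorem 1.1]{Ichihara2012}, a Seifert surgery on a component forces the link to be of the form $[2w+1,2u+1]$; and a comparison of simple continued fraction expansions via \cite[Theorem 2.1.11]{KawauchiBook}, using $[2,2k,-2]=[2,2k-1,2]$ for $k \ge 2$, shows $L_k$ is not such a link. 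Hence $0$ is the only exceptional slope on $K'$, and since $p>0$ gives $-p/q \ne 0$, the filling is hyperbolic. Your verification that $L_k$ is a hyperbolic link is fine, but without this uniqueness step (or a correct explicit list of exceptional slopes with the bookkeeping actually carried out), the proposal does not close.
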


\begin{proof}
It suffices to show that $(-p/q)$-surgery on $K'$ is not exceptional, which implies $K$ is hyperbolic.
Suppose to the contrary that $(-p/q)$-surgery on $K'$ is exceptional for $p > 0$ and $q \ne 0$.
We observe that the $0$-surgery on $K'$ is exceptional, indeed, it is toroidal, since $K'$ bounds a once-punctured torus in the exterior of $L_k$. 
Also it is shown in \cite[Proof of Lemma 3]{ichiharaMattman} that if a component of a hyperbolic 2-bridge link admits multiple exceptional surgeries, then there exists at least one Seifert surgery. 
Thus, $K'$ must admit a Seifert surgery. 
Then, by \cite[Theorem 1.1]{Ichihara2012}, $L_k$ is equivalent to the link represented by the continued fraction $[2w+1,2u+1]$ for $w \ge 1$, $u \ne 0, -1$.
Also see \cite[Lemma 3]{ichiharaMattman}.

By \cite[Theorem 2.1.11]{KawauchiBook} (see also \cite[Lemma 1]{ichiharaMattman}), to distinguish two-bridge links represented by continued fractions, it suffices to check their simple (i.e., all terms positive) continued fraction expansions.

In fact, since
\[
[2w+1,2u+1]
=
\begin{cases}
[2w+1,2u+1] & \text{if $w,u \ge 1$},\\
[2w,1,-2u-2] & \text{if $w \ge 1$ and $u \le -3$},
\end{cases}
\]
and $[2,2k,-2] = [2,2k-1,2]$ ($k \ge 2$) hold, we have a contradiction. 
\end{proof}

It remains to show that $K$ is a small knot.
Let $E(K)$ be the exterior of $K$ in $L(p,q)$.
Suppose that $K$ is not a small knot in $L(p,q)$, i.e., there exists a closed incompressible surface $F$ of genus at most two in $E(K)$. 
Then, we can find an essential surface $F'$ with boundary in $E(L_k)$ such that its boundary slope on $\partial N(K')$ is $-p/q$.
After performing meridional compressions on $F'$ for $K$, we obtain an essential surface in $E(L_k)$ with (possible) boundary on $K$ of slope $1/0$ and with boundary on $K'$ of slope $-p/q$.

Such boundary slopes for 2-bridge links are completely classified, and there is an algorithm to enumerate them.
See \cite{FloydHatcher, Lash, HosteShanahan}.
In particular, the list of boundary slope pairs for $L_k$ is given in \cite[Section 5, Table 4]{HosteShanahan}.
See Table~\ref{tab:1}.


\begin{table}[htb]
\centering
\caption{Boundary slopes for $L_k$ (both $t$ and $s$ are rational parameters).}
\label{tab:1}
\begin{tabular}{|l|c|}
\hline
$\partial$-slopes&restrictions\\
\hline
\hline
$(0,0)$&\\
$(0, \emptyset),(\emptyset, 0)$&\\
$(-4k, \emptyset),(\emptyset, -4k)$&\\
$(-4k, -2),(-2, -4k)$&\\
$(2 t^{-1}, 2t)$&$0\le t \le \infty$\\
$(-2 t^{-1}, -2t)$&$0\le t \le \infty, k>1$\\
$(-2 t^{-1}+2-4k, -2t)$&$0\le t \le 1$\\
$(-2 t^{-1},2-4k- 2t)$&$1\le t \le \infty$\\
$(-1-2k+(2k-1)s, -1-2k-(2k-1)s)$&$-1 \le s\le 1$\\
\hline
\end{tabular}
\end{table}

Since there are no slope pairs $\{ 1/0 \ (=\infty), -p/q\}$ or $\{ \emptyset, -p/q \}$ when $-p/q \ne 4k$, we obtain a contradiction. 
\end{proof}

\begin{remark}
The argument in the proof above is also applicable to the manifold $S^2 \times S^1$.
That is, there exist infinitely many hyperbolic small knots in $S^2 \times S^1$.
\end{remark}

\begin{remark}
Hyperbolic small knots in lens spaces can also be found as hyperbolic genus one fibered knots, since such knots are small by \cite{FloydHatcher1982, CullerJacoRubinstein}.
A complete list of lens spaces containing genus one fibered knots is provided in \cite{Baker}, which shows that some lens spaces do not contain any such fibered knots. 
\end{remark}

\subsection{Spherical manifolds of type $\mathbf{T}$, $\mathbf{O}$, $\mathbf{I}$}

\begin{proof}[Proof of Theorem~\ref{thm:TOI}]
It is known that a spherical 3-manifold is either a manifold of type $\mathbf{T}$, $\mathbf{O}$, or $\mathbf{I}$, which is a Seifert 3-manifold described as $\pm (-1 ; \frac{1}{2}, \frac{1}{3}, \frac{a_3}{b_3})$ with $b_3 \in { 3,4,5 }$ and $(a_3,b_3)=1$.
See \cite[Theorem 6]{Doig}, for example.
Moreover, it is obtained by $(6 - b_3/a_3)$-surgery on the right-hand trefoil $T_{3,2}$ in $S^3$.
See \cite[Corollary 8]{Doig}, for example.

We here consider the Whitehead link $L = K \cup K'$, which is represented by the continued fraction $[2,2,-2]$.
(The link $L_k$ in the previous section with $k=1$.)
We note that the right-hand trefoil is obtained by $1$-surgery on $K' \subset L_1$.
It follows that a spherical 3-manifold of type $\mathbf{T}$, $\mathbf{O}$, or $\mathbf{I}$ is obtained by Dehn surgery on $L = K \cup K'$ along the pair of slopes $(6 - b_3/a_3, 1)$.

Now, let $M$ be a spherical 3-manifold of type $\mathbf{T}$, $\mathbf{O}$, or $\mathbf{I}$, which is not a Seifert fibered manifold $\pm (-1; 1/2, 1/3, 1/m )$ with $m \in { 3, 4, 5 }$.
Then, by the previous arguments, $M$ is obtained by Dehn surgery on $L = K \cup K'$ along the pair of slopes $(6 - b_3/a_3, 1)$ with $b_3 \in { 3,4,5 }$, $(a_3,b_3)=1$ and $a_3 \ne 1$.

We consider the dual knot $K''$ of $K'$, that is the core of the solid torus attached to $\partial N(K')$.
This $K''$ is a hyperbolic knot in $M$, since $(6 - b_3/a_3)$-surgery on $K \subset L$ is not exceptional.
See \cite[Theorem 3.1]{MartelliPetronioRoukema} and \cite{Ichihara2012}. 

It remains to show that this $K''$ is a small knot in $M$.
In the same way as the proof of Theorem~\ref{thm:lens}, if it is not, there would exist an essential surface in $E(L)$ with (possibly) boundary on $K$ of slope $1$ and with boundary on $K'$ of slope $6 - b_3/a_3$.
However, from Table~\ref{tab:1}, we have a contradiction.
\end{proof}

Unfortunately, this proof does not apply to the case where $a_3 = 1$, since $1$-, $2$-, and $3$-surgeries on $L$ are actually exceptional.

\section{Question}

For manifolds with finite fundamental groups, the remaining cases are Prism manifolds and Seifert fibered spaces of the form $\pm (-1; 1/2, 1/3, 1/m)$ with $m \in { 3, 4, 5 }$.

For coprime integers $n, m$, a Seifert fibered manifold presented by $(-1; 1/2, 1/2, m/n)$ with $S^2$ as the base space is called a \textit{Prism manifold} $P(n,m)$. 
This can also be expressed as a twisted $I$-bundle over the Klein bottle, glued with a solid torus such that the pair $(n, m)$ on the boundary of the $I$-bundle bounds a disk in the attached solid torus. 

Thus, the following is a natural question:

\begin{question}
Can we find explicit examples of hyperbolic small knots in a prism manifold? 
\end{question}

Our argument in this paper cannot apply to prism manifolds, since they are not obtained by Dehn surgeries on 2-bridge links. 

\section*{Acknowledgments}
The author would like to thank Luis Miguel Lopez and Hiroshi Matsuda for helpful conversation about their paper \cite{Lopez,Matsuda}.

\bibliographystyle{amsplain}


\providecommand{\bysame}{\leavevmode\hbox to3em{\hrulefill}\thinspace}
\providecommand{\MR}{\relax\ifhmode\unskip\space\fi MR }
\providecommand{\MRhref}[2]{%
  \href{http://www.ams.org/mathscinet-getitem?mr=#1}{#2}
}
\providecommand{\href}[2]{#2}

\end{document}